\newtheorem{theorem}{Theorem}
\newtheorem{lemma}{Lemma}[section]
\theoremstyle{definition}
\newtheorem{definition}[lemma]{Definition}
\newtheorem{notation}[lemma]{Notation}
\theoremstyle{remark}
\newtheorem{remark}[lemma]{Remark}
\numberwithin{equation}{section}
\DeclareMathOperator{\Aut}{Aut}
\DeclareMathOperator{\Sym}{Sym}
\DeclareMathOperator{\supp}{supp}
\newcommand\N{\mathbb{N}}
\newcommand\R{\mathbb{R}}
\newcommand\T{\mathcal{T}}
\newcommand\D{\mathcal{D}}
\newcommand\G{\mathcal{G}}
\newcommand\HH{\mathcal{H}} 
\newcommand\Net{\mathcal{N}}
\newcommand\Id{\mathbf{1}} 
\newcommand\charr{\chi}
\newcommand{\Wr}[3]{#1 \operatorname{Wr}_{#3} #2}
\newcommand{\wwr}[3]{#1 \wr_{#3} #2}
\newcommand{\aut}{\Aut_{\operatorname{fin.}}}
\newcommand{\autB}{\Aut_{\mathcal{B}}}
\newcommand{\autF}{\Aut_{\mathcal{F}}}
\newcommand{\autBfs}{\Aut^{f.s.}_{\mathcal{B}}}
\newcommand{\norm}[1]{\left\lVert#1\right\rVert}
\newcommand{\card}[1]{\left\lvert#1\right\rvert}
\newcommand{\qedcited}{\hfill \ensuremath{\triangle}}
\begin{document}

\title{Amenability of Bounded Automata Groups on Infinite Alphabets}

\author{Bernhard Reinke}
\address{Institut de Mathématiques (UMR CNRS7373) \\
Campus de Luminy \\
163 avenue de Luminy --- Case 907 \\
13288 Marseille 9 \\
France}
\curraddr{}
\email{}
\thanks{}

\subjclass[2010]{20E08; 05C81; 43A07; 22A22; 37B10}

\keywords{automata groups; bounded activity; infinite alphabets;
amenability; recurrence; random walks; extensive amenability}
\date{}

\begin{abstract}
  We study the action of groups generated by bounded activity automata
	with infinite alphabets on their orbital Schreier graphs.
  We introduce an amenability criterion for such groups based on the
	recurrence of the first level action. This criterion is a natural
	extension of the result that all groups generated by bounded activity
	automata with finite alphabets are amenable. Our motivation comes from
	the investigation of iterated monodromy groups of entire functions.
\end{abstract}

\maketitle

\section{Introduction}
Self-similar groups provide many examples of ``exotic'' amenable groups.
The Grigorchuk group~\cite{Grigorchuk1983} was the first example of
a group of intermediate growth. Groups of intermediate growth are always
amenable, but not elementary amenable (see~\cite{chou1980}).
The basilica group is amenable~\cite{bartholdi2005amenability}, but not
elementary subexponentially amenable~\cite{GZ2002}.

Both the Grigorchuk group and the basilica group are examples of
automata groups on a two-letter alphabet of bounded activity growth. They fit into the hierarchy
of polynomial activity growth 
introduced in~\cite{sidki2000}, where both finite and infinite alphabets are considered.
Under certain assumptions (which are always satisfied for finite alphabets), these groups do not contain free subgroups (see~\cite{sidki2004finite}).
For finite alphabets, it is shown in~\cite{Bartholdi2010} that the group generated by bounded activity automata is amenable. A large family of such groups
are iterated monodromy groups of post-critically finite
polynomials~\cite{Nekrashevych2009}. Furthermore, in~\cite{AAV} it is 
shown that automata groups on finite of linear activity growths are
amenable. The techniques of~\cite{Bartholdi2010} and~\cite{AAV} have
been conceptualized in~\cite{juschenko2016}.

In~\cite{Reinkeentire}, we show that iterated monodromy groups of post-singularly finite entire functions are given by bounded activity automata on infinite alphabets.
We expect many similarities of these groups to their polynomial counterparts, so one question in particular is amenability. We can not expect all iterated monodromy groups of
post-singularly finite entire functions to be amenable, as there are entire functions with monodromy group $C_2 * C_2 * C_2$. 

In the forthcoming paper~\cite{Reinkeentire}, we show the following:
\begin{theorem}[Main application]
  Let $f$ be a post-singularly finite entire function. Then the iterated monodromy group of $f$ is amenable if and only if the monodromy group of $f$ is amenable.
	\label{thm:motivation}
\end{theorem}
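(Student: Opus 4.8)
The proof separates into the two implications of the equivalence, of which one is soft and one carries all the weight. For the forward implication, recall that the monodromy group of $f$ is by definition the image of $\IMG(f)$ under the action on the first level of the rooted tree, that is, the first-level permutation group inside $\Sym(X)$, where the alphabet $X$ is the (infinite) fiber $f^{-1}(\ast)$ over the basepoint. Since the monodromy group is thus a quotient of $\IMG(f)$, and amenability passes to quotients, amenability of $\IMG(f)$ forces amenability of the monodromy group. This direction uses nothing about $f$ beyond post-singular finiteness, which is what guarantees that the tree, the alphabet, and the first-level quotient are well defined.

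The reverse implication is the substantive one, and the plan is to reduce it to the amenability criterion developed in this paper. That criterion asserts that a group generated by a bounded activity automaton on an infinite alphabet, whose first-level action is recurrent, is amenable as soon as its first-level (monodromy) group is amenable. Applying it requires two inputs. First, that $\IMG(f)$ is generated by a bounded activity automaton on the infinite alphabet $X$; this is exactly the structural description of $\IMG(f)$ established in~\cite{Reinkeentire}, where the wreath recursion is shown to move only finitely many letters nontrivially at each generator. Second, that the action of $\IMG(f)$ on the first level $X$ is recurrent. Granting both, the criterion converts amenability of the monodromy group into amenability of $\IMG(f)$, which together with the forward implication yields the stated equivalence.

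The heart of the matter, and the step I expect to be the main obstacle, is the recurrence of the first-level action. This is precisely the ingredient that is vacuous in the finite-alphabet theory: when $X$ is finite the first level is a finite set, the monodromy group is a finite --- hence amenable --- permutation group, and recurrence is automatic, which is why every bounded activity automaton group on a finite alphabet is unconditionally amenable. On an infinite alphabet recurrence becomes a genuine geometric condition encoding the post-singular dynamics of $f$, and it must be verified uniformly across all post-singularly finite entire functions, not merely in favorable examples. The plan is to model the orbital Schreier graph of the first-level action combinatorially from the fiber $f^{-1}(\ast)$ together with the monodromy data around the finitely many singular values, and then to prove recurrence of the simple random walk on this model. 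Here the planar, one-complex-dimensional geometry of the dynamical plane should be decisive: post-singular finiteness bounds the combinatorial type and the branching of the graph, and I expect recurrence to follow from a Nash--Williams effective-resistance or volume-growth estimate showing that, although the fiber is infinite, it is organized along a recurrent skeleton comparable to $\Z$ or $\Z^2$ rather than a transient one. Once this recurrence is in place, the criterion of this paper removes the first level entirely and leaves amenability of $\IMG(f)$ equivalent to amenability of the monodromy group.
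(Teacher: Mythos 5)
Your outline matches the paper's intended route exactly: the paper does not actually prove this theorem here, but states explicitly that it supplies only the group-theoretic half (Theorem~\ref{thm:maintheoremintro}, applied with $P$ the monodromy group acting on the fiber $X=f^{-1}(\ast)$), while the forward implication (monodromy group as a first-level quotient of $\IMG(f)$), the realization $\IMG(f)\subset\autBfs(X^*;P)$, and the recurrence of the monodromy action on the fiber are all established in the forthcoming~\cite{Reinkeentire}. You correctly identify the recurrence of the first-level action as the one input that is genuinely new in the infinite-alphabet setting; your sketch of how to verify it is speculative but consistent with the Nash--Williams machinery the paper sets up, and nothing in your architecture conflicts with the paper's.
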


In this paper we provide the main group theoretic part of the proof of this theorem. We 
show the following:

\begin{theorem}
  Let $P$ be an amenable subgroup of $\Sym(X)$. Suppose that the action of $P$ on $X$ is recurrent.
  Then $\autBfs(X^*;P)$ is amenable.
	\label{thm:maintheoremintro}
\end{theorem}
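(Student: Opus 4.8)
The plan is to work entirely inside the framework of \emph{extensive amenability} from \cite{juschenko2016}, which is exactly the language in which the finite-alphabet theorem of \cite{Bartholdi2010} was recast. Two inputs from that theory drive the argument. First, if an action of a group on a set has recurrent orbital Schreier graphs, then that action is extensively amenable. Second (the assembly step), if $Q\curvearrowright Z$ is extensively amenable, $Q$ is amenable, and $A$ is amenable, then the permutational wreath product $A\wr_Z Q=\bigl(\bigoplus_Z A\bigr)\rtimes Q$ is amenable. The recurrence hypothesis on $P\curvearrowright X$ is tailored to feed the first input, and amenability of $P$ feeds the second; the whole difficulty is to organize the self-similar group $G\coloneqq\autBfs(X^*;P)$ so that these two inputs suffice.

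First I would make the self-similar structure explicit. The root-permutation map $\partial\colon G\to P$, $g\mapsto\sigma_g$, is a surjective homomorphism (a permutation of $X$ acting on level one already lies in $G$), and the section decomposition $g\mapsto\bigl(\sigma_g,(g_x)_{x\in X}\bigr)$ realizes the wreath recursion $G\hookrightarrow G\wr_X P=\bigl(\prod_{x\in X}G\bigr)\rtimes P$, the image being cut out by the bounded-activity and finite-state constraints. In particular the first-level action $G\curvearrowright X$ factors through $\partial$ and the recurrent action $P\curvearrowright X$; since $G$ acts on $X$ through the quotient $\partial$, any invariant mean witnessing extensive amenability of $P\curvearrowright X$ is automatically $G$-invariant, so $G\curvearrowright X$ is extensively amenable. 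This is the concrete form in which the hypothesis enters.

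The assembly then proceeds in two layers. For the finitary layer --- automorphisms supported on finitely many levels --- the depth-$n$ subgroup is the iterated permutational wreath product $P\wr_X P\wr_X\cdots\wr_X P$, and a straightforward induction applying the assembly step with base action $P\curvearrowright X$ (amenable and extensively amenable) shows each is amenable; as amenability is closed under directed unions, the finitary subgroup is amenable. The genuinely new content is the \emph{directed} part: the bounded automata whose activity persists along a single infinite ray are not finitary, so $G$ strictly contains this directed union. To capture them I would pass to the boundary action $G\curvearrowright X^\omega$, show that its orbital Schreier graphs are recurrent --- here bounded activity is essential, since it forces the orbital graph to be a stack of copies of the recurrent first-level graph $\Gamma(P\curvearrowright X)$ with uniformly bounded branching along the ray --- and then invoke the first input to obtain extensive amenability of $G\curvearrowright X^\omega$. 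A Münchhausen-type argument, using that the section cocycle $(g,w)\mapsto g|_w$ takes boundedly many values along each orbit, feeds this extensive amenability back through the recursion $G\hookrightarrow G\wr_X P$ together with amenability of $P$ to conclude that $G$ itself is amenable.

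The hardest step I expect to be the propagation of recurrence from the single first-level action $P\curvearrowright X$ to the orbital graphs of $G\curvearrowright X^\omega$. Over an infinite alphabet there is no finiteness shortcut --- every orbital graph is infinite already at the first level --- so this cannot be done by any soft argument and will require a quantitative effective-resistance (Nash--Williams) estimate showing that stacking recurrent first-level graphs along a bounded-activity ray preserves recurrence. A second, more bookkeeping, obstacle is the self-referential nature of the recursion $G\hookrightarrow G\wr_X P$: the assembly cannot be a single application of the wreath-product theorem but must be run as a genuine induction or Münchhausen trick, and one must verify that extensive amenability is inherited at each stage and survives the passage to the directed automata.
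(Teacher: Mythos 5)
Your overall strategy matches the paper's: the proof does run through recurrence of the orbital Schreier graphs of $G\curvearrowright X^\omega$ and the extensive-amenability machinery, which the paper invokes in the packaged form of Theorem~3.1 of~\cite{juschenko2016} (extensions of amenable groups by recurrent groupoids) rather than reassembling it by hand. You also correctly isolate the genuinely new technical content, namely propagating recurrence from $P\curvearrowright X$ to the boundary orbital graphs over an infinite alphabet. The paper does this in two stages: first, recurrence of the action of $\wwr{B}{A}{L}$ on $L\times M$ when both factor actions are recurrent (via the shorting and Nash--Williams criteria), which gives recurrence of every finite level; then recurrence of the boundary orbital graphs of a finitely generated subgroup of $\autB(X^*)$, using that the boundary of each $m$-tail-equivalence class is uniformly bounded by $K\card{S}$ when all generators have activity at most $K$. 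Your picture of ``stacking recurrent first-level graphs with uniformly bounded branching'' is essentially Lemma~\ref{lem:recurrentautb}, so on this point your instinct about where the difficulty lies is right.

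The gap is in your final assembly step. Extensive amenability of $G\curvearrowright X^\omega$, together with amenability of the finitary subgroup and of $P$, does not by itself yield amenability of $G$: the criterion you are implicitly reproving also requires the isotropy groups of the groupoid of germs at the singular points to be amenable (condition~(4) of Theorem~\ref{thm:JNdlS}), and your ``M\"unchhausen-type argument'' is a black box exactly where this content lives. Concretely, after replacing $X$ by $X^N$ so that every generator has all first-level sections either finitary or directed along a constant ray (Lemma~\ref{lem:structbounded}), the singular points are the eventually constant sequences $w=vxxx\dots$, and one must prove that the isotropy group $\G_w$ is amenable. The paper does this in Lemma~\ref{lem:isotopygroups} by showing that the sections $g_{|v_n}$ along prefixes of $w$ eventually stabilize and that $\G_w$ embeds into $\wwr{\autF(X^*;P)}{P_x}{X\setminus\left\{ x \right\}}$, where $P_x$ is the stabilizer of $x$ in $P$; amenability of that wreath product is what finally tames the directed automata. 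Without this step, or an equivalent control of the germs along the directed rays, the directed part of $G$ is not actually accounted for in your argument. Everything else in your outline is sound, including the observation that extensive amenability of the first-level action is inherited from $P$ because the level-one action factors through the root-permutation map.
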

See Section~2 for a precise definition of $\autBfs(X^*;P)$, it is
roughly the groups of bounded activity automata were every first level
action is in $P$. 

We note that Theorem~\ref{thm:motivation} is our main motivation for Theorem~\ref{thm:maintheoremintro}, but this paper does not logically depend on~\cite{Reinkeentire}.

In Section~2, we start by introducing self-similar groups on infinite
alphabets and related concepts, such as the space of ends. We continue in Section~3 with a
discussion of recurrent random walks and how to pass from a recurrent
action on the alphabet to a recurrent action of a bounded activity group
on the space of ends. This will be a key ingredient to invoke the amenability criterion
of~\cite{juschenko2016} in Section~4 to prove
Theorem~\ref{thm:maintheoremintro}. In Section~5, we briefly discuss the forthcoming paper and further related open questions.

\emph{Acknowledgements.} We gratefully acknowledge support by the Advanced Grant HOLOGRAM by the European Research Council. Part of this research was
done during visits at Texas A\&M University and at UCLA\@.
We would like to thank our hosts, Volodymyr Nekrashevych and Mario Bonk, as well as the HOLOGRAM team, in particular Kostiantyn Drach, Dzmitry Dudko, Mikhail Hlushchanka, David Pfrang and Dierk Schleicher, for helpful discussions and comments.

\section{Regular trees}
In this section we introduce of self-similar groups
and other relevant concepts and fix the notation. 

\begin{definition}
  Let $X$ be a countable infinite set.
  The standard $X$-regular tree has as vertex set $X^*$, the set of finite
  words in $X$. Its root is the empty word $\emptyset$. Its edges are all pairs
  $(v, vx)$ for $v \in X^*, x \in X$. By abuse of notion, we denote the standard $X$-regular tree also as $X^*$, and
  we denote by $\Aut(X^*)$ the group of
  rooted tree automorphisms of $X^*$. We denote the identity of $\Aut(X^*)$ by $\Id$.

  For $v\in X^*$, let $vX^*$ be the subtree of all descendants of $v$. If $g \in
  \Aut(X^*), v \in X^*$, there is a unique $g_{|v} \in \Aut(X^*)$ given
  by $g(vw) = g(v)g_{|v}(w)$. This is called the \emph{section} of $g$ along $v$.

\end{definition}

  A set $S \subset \Aut(X^*)$ is called \emph{self-similar} if it is closed under
  taking sections, i.e.\ $g_{|v} \in S$ for all $g \in S, v \in X^*$. We
  are mainly interested in self-similar groups, i.e.\ subgroups $G \subset
  \Aut(X^*)$ that are self-similar as sets.

  For $g \in \Aut(X^*)$, the activity $\alpha_n(g) \in \N \cup \infty$ of $g$
  on level $n$ is the
  number of words $v$ of length $n$ for which the section $g|_v$ is not
  trivial. We denote by $\aut(X^*)$ the set of automorphisms with finite activity
  on every level. If $g \in \aut(X^*)$ has a $n$ so that $g_{|v} = \Id$ for all
  $v \in X^n$, we say that that $g$ is finitary. If $g \in \aut(X^*)$ has a $c
  \in \N$
  so that $\alpha_n(g) \leq c$ for all $n$, we say that $g$ has bounded activity.

  We denote by $\autB(X^*)$ the set of automorphisms with bounded activity, and
  by $\autF(X^*)$ the set of finitary automorphisms.

  We also have maps $\rho_n \colon \Aut(X^*) \rightarrow \Sym(X^n)$, which are
  induced by the action of $\Aut(X^*)$ on the $n$-th level. Let $P$ be a subgroup
  of $\Sym(X)$. Let $\Aut(X^*;P)$ denote the set of automorphisms such that
  $\rho_1(g_{|v}) \in P$ for all $v \in X^*$. We denote by $\aut(X^*;P),
  \autB(X^*;P),\autF(X^*;P)$ the intersections of $\aut(X^*), \autB(X^*), \autF(X^*)$
  with $\Aut(X^*;P)$ respectively.

Since we consider infinite alphabets, let us fix notations for the two versions of
wreath products.

\begin{notation}
  Let $A$ and $B$ be groups, $L$ be a set with an $A$-left action. The unrestricted
  wreath product $\left(\prod_{l \in L} B \right) \rtimes A$ is denoted $\Wr{B}{A}{L}$, the
  restricted wreath product $\left(\bigoplus_{l \in L} B\right) \rtimes A$ is denoted $\wwr{B}{A}{L}$.

  We will mainly work with the restricted wreath product. We denote the right factor embedding
  $A \rightarrow \wwr{B}{A}{L}$ by $\iota$, and by $b @ l$ the image of $b$ under the embedding
  of $B$ into the component indexed by $l$. 

\end{notation}
For a subgroup $P$ of $\Sym(X)$, we denote the $n$-th iterated restricted wreath product (along $X$) by $P_n$.
So $P_1 = P$ and $P_{n+1} = \wwr{P_n}{P}{X}$. Note that if $P$ is amenable, then all $P_n$ are amenable.
With this in mind we have the following:

\begin{lemma}
  \begin{eqnarray*}\label{eqn:wreathrecursion}
    \Aut(X^*;P) &\rightarrow& \Wr{\Aut(X^*;P)}{P}{X} \\
    g &\mapsto& (x \mapsto g_{|x}, \rho_1(g))
  \end{eqnarray*}
  is an isomorphism of groups. It restricts to isomorphisms
  \begin{eqnarray*}
    \aut(X^*;P) &\cong& \wwr{\aut(X^*;P)}{P}{X} \\
    \autB(X^*;P)&\cong& \wwr{\autB(X^*;P)}{P}{X} \\
    \autF(X^*;P)&\cong& \wwr{\autF(X^*;P)}{P}{X} \\
  \end{eqnarray*}
  \qedcited
\end{lemma}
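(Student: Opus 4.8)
The plan is to build an explicit inverse to the stated map and then check, step by step, that it intertwines the group laws and each activity constraint; this lets me avoid treating injectivity and surjectivity separately. Throughout I would lean on two recursions. The first is the \emph{section recursion}: for $g,h\in\Aut(X^*)$ one has $\rho_1(gh)=\rho_1(g)\rho_1(h)$ and $(gh)_{|x}=g_{|\rho_1(h)(x)}\,h_{|x}$, obtained by expanding $g(h(xw))$ with the defining relation $g(vw)=g(v)g_{|v}(w)$ applied twice. The second is the \emph{activity recursion} $\alpha_{n+1}(g)=\sum_{x\in X}\alpha_n(g_{|x})$, which follows at once from $g_{|xw}=(g_{|x})_{|w}$: a word $xw$ of length $n+1$ has nontrivial section exactly when $w$ is a nontrivial-section word of $g_{|x}$.

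For bijectivity I would define the candidate inverse directly: given $(f,a)$ with $f\in\prod_{x\in X}\Aut(X^*;P)$ and $a\in P$, set $g(xw):=a(x)\,f(x)(w)$. This is length- and prefix-preserving, hence lies in $\Aut(X^*)$, and satisfies $\rho_1(g)=a$ and $g_{|x}=f(x)$; since every higher section is $g_{|xw}=f(x)_{|w}$ and $f(x)\in\Aut(X^*;P)$, the membership condition $\rho_1(g_{|v})\in P$ holds for all $v$, so $g\in\Aut(X^*;P)$. That this map and the stated one are mutually inverse is immediate from $g(xw)=\rho_1(g)(x)\,g_{|x}(w)$. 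Self-similarity of $\Aut(X^*;P)$---namely $g_{|x}\in\Aut(X^*;P)$ whenever $g$ is, which holds because $(g_{|x})_{|w}=g_{|xw}$---guarantees that the stated map really takes values in $\prod_{x\in X}\Aut(X^*;P)$. Finally, the section recursion says precisely that the stated map carries composition in $\Aut(X^*;P)$ to the wreath-product multiplication, so it is a group isomorphism.

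It remains to check the three restrictions, and here the activity recursion does the work in both directions. For $\aut(X^*;P)$: finiteness of $\alpha_1(g)$ is exactly the statement that $x\mapsto g_{|x}$ has finite support, placing it in $\bigoplus_{x}\aut(X^*;P)$; each section lies in $\aut$ because $\alpha_n(g_{|x})\le\alpha_{n+1}(g)<\infty$, and conversely a finitely supported family of finite-activity sections reassembles to a finite-activity automorphism, since each $\alpha_{n+1}(g)$ is then a finite sum of finite terms. The finitary case is analogous, taking $N$ to be the maximal depth over the (finite) support so that all sections vanish beyond level $N+1$. The bounded case carries the only genuine estimate: if $\sup_n\alpha_n(g)=c$ then $\alpha_n(g_{|x})\le\alpha_{n+1}(g)\le c$ shows every section has bounded activity with the same bound $c$, while in reverse a family supported on a finite set $S$ with uniform bound $c$ yields $\alpha_{n+1}(g)=\sum_{x\in S}\alpha_n(g_{|x})\le\card{S}\,c$, bounded uniformly in $n$.

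The main obstacle I anticipate is not conceptual but bookkeeping: pinning down the precise convention under which the section recursion $(gh)_{|x}=g_{|\rho_1(h)(x)}\,h_{|x}$ coincides, index for index, with the multiplication in $\Wr{\Aut(X^*;P)}{P}{X}$ (the $P$-action on the product reindexes the section family by $\rho_1$, and one must be consistent about left versus right actions throughout). Once that convention is fixed, the homomorphism property is forced by the recursion, and the passage between $\prod$ and $\bigoplus$ in the three restricted statements is exactly the finite-support dichotomy supplied by the activity recursion.
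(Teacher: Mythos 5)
Your proof is correct and complete. Note that the paper does not actually prove this lemma: it is stated with a citation marker and the text defers the first isomorphism to Sidki's paper, asserting the restrictions without argument. Your direct verification --- the section recursion $(gh)_{|x}=g_{|\rho_1(h)(x)}h_{|x}$, the explicit inverse $(f,a)\mapsto\bigl(xw\mapsto a(x)f(x)(w)\bigr)$, and the activity recursion $\alpha_{n+1}(g)=\sum_{x\in X}\alpha_n(g_{|x})$ driving the three restrictions --- is exactly the standard argument that the citation stands in for, and each of your three case analyses (finite support from $\alpha_1(g)<\infty$ for $\aut$; the maximum depth over the finite support for $\autF$; the estimate $\alpha_{n+1}(g)\le\card{S}\,c$ for $\autB$) is sound. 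On the one point you flag as a potential obstacle: the bookkeeping resolves itself if you take the wreath multiplication in the form $\bigl((g_x)_x,\sigma\bigr)\bigl((h_x)_x,\tau\bigr)=\bigl((g_{\tau(x)}h_x)_x,\sigma\tau\bigr)$, which is the semidirect product $\prod_x \Aut(X^*;P)\rtimes P$ for the left $P$-action by $\sigma\cdot f=f\circ\sigma^{-1}$ precomposed appropriately; since the resulting group is isomorphic under either left/right convention, nothing in the lemma depends on the choice, so your deferral of this detail is harmless.
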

For the first line, see for example~\cite{sidki2000}. By iteration, we also get isomorphisms
  \begin{eqnarray*}\label{eqn:wreathrecursionn}
    \aut(X^*;P) &\rightarrow& \wwr{\aut(X^*;P)}{P_n}{X} \\
    g &\mapsto& (v \mapsto g|v, \rho_n(g))
  \end{eqnarray*}
  and $\aut(X^*;P) \cong \aut({X^n}^*;P_n)$. 
\subsection{Action on space of ends $X^\omega$}
We will also use the action of $\Aut(X^*)$ on the space of ends of $X^*$.
The set of ends of $X^*$ can be identified with $X^\omega$, the set of right infinite words in $X$.
The open cylinder sets $C(v) = \left\{ vw \colon w \in X^\omega\right\}$ form a basis of the end topology on $X^\omega$. Since $X$ is countable infinite,
$X^\omega$ is homeomorphic to the Baire space $\N^\N$, in particular $X^\omega$ is Hausdorff, but not locally compact.
The action of $\Aut(X^*)$ on $X^\omega$ is faithful, so we can also think of elements of $\Aut(X^*)$ as homeomorphisms on $X^\omega$.
We will use the language of germs: these are equivalence classes of pairs $(g, w) \in \Aut(X^*) \times X^\omega$, where
$(g,w) \sim (h, w')$ if $w = w'$ and $g$ and $h$ agree on a neighborhood of $w$. Since we only consider germs of $\Aut(X^*)$,
and $\left\{C(v) \colon v \text{ is a prefix of } w\right\}$ forms a neighborhood basis, $(g,w) \sim (h, w')$ is equivalent to $w=w',g(w) = h(w)$ and
$g_{|v} = h_{|v}$ for some $v$ prefix of $w$. We denote by $\T$ the groupoid of germs of tail equivalences, that is germs of the form $(g, w)$ with $g_{|v}$ trivial for some prefix $v$ of $w$.
Given a groupoid of germs $\HH$, we denote by $\left[\left[\HH\right]\right]$ the set of global homeomorphisms, such that all their germs belong to $\HH$.

We have $\autF(X^*) \subset \left[ \left[ \T \right] \right]$. In contrast to the case when $X$ is finite, we do not have equality, as we can easily produce elements in
$\left[ \left[ \T \right] \right]$ which are not even in $\aut(X^*)$.

If $w, w' \in X^\omega$ can be factored as $w=vu, w'=v'u$ with $v, v' \in X^n,u \in X^\omega$, we say that $w$ and $w'$ are $n$-\emph{tail equivalent}.
We say that $w$ and $w'$ are tail equivalent (or cofinal) if they are $n$-tail equivalent for some $n$.
The $n$-tail equivalence class of $w$ is denoted by $T_n(w)$ and $T(w) = \bigcup_{n \in \N} T_n(w)$ is the cofinality class of $w$.
\begin{lemma}
  Let $g \in \autB(X^*)$. There are only finitely many $w$ such that $(g,w)$ is not in $\T$.
  If $(g,w)$ is in $\T$, then $w$ and $g(w)$ are cofinal.
  \label{lem:autbcofinal}
\end{lemma}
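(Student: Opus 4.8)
The plan is to reorganize the information carried by $g$ into a subtree of $X^*$ that records where $g$ acts nontrivially. If $g = \Id$ the statement is trivial, since every germ $(g,w)$ lies in $\T$ and $g(w)=w$; so assume $g \neq \Id$. Call a vertex $v \in X^*$ \emph{active} if $g_{|v} \neq \Id$, and let $A \subseteq X^*$ denote the set of active vertices. The first thing I would observe is that $A$ is closed under taking prefixes: writing $v = v'v''$ one has $g_{|v} = (g_{|v'})_{|v''}$, so if $g_{|v'} = \Id$ then $g_{|v} = \Id$; that is, triviality of a section propagates to all its extensions, and contrapositively every prefix of an active vertex is active. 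Thus $A$ is a rooted subtree containing the root, and the bounded activity hypothesis furnishes a constant $c$ with $\alpha_n(g) = \card{A \cap X^n} \le c$ for every $n$, i.e. $A$ has at most $c$ vertices on each level.

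Next I would translate the condition $(g,w) \notin \T$ into this language. By definition $(g,w) \in \T$ exactly when some prefix $v$ of $w$ satisfies $g_{|v} = \Id$; hence $(g,w) \notin \T$ precisely when every prefix of $w$ is active, that is, when $w$ is an infinite branch of $A$. So the first assertion reduces to the combinatorial claim that a rooted subtree with at most $c$ vertices on each level has at most $c$ infinite branches. I would prove this by a separation argument: given $k$ distinct infinite branches, any two of them diverge at some finite level, so beyond the maximum of these finitely many divergence levels the $k$ branches pass through $k$ pairwise distinct vertices on a single level; the width bound then forces $k \le c$. This shows there are at most $c$, and in particular finitely many, ends $w$ with $(g,w) \notin \T$.

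The second assertion is then a direct computation. If $(g,w) \in \T$, choose a prefix $v \in X^n$ of $w$ with $g_{|v} = \Id$ and write $w = vu$ with $u \in X^\omega$. Using the defining relation $g(vu) = g(v)\, g_{|v}(u) = g(v)\,u$, and noting that $g$ preserves levels so that $g(v) \in X^n$, we see that $w = vu$ and $g(w) = g(v)u$ share the common tail $u$; hence $w$ and $g(w)$ are $n$-tail equivalent, and in particular cofinal.

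The only real subtlety worth flagging is that, because $X$ is infinite, the tree $X^*$ is not locally finite, so one cannot appeal to K\"onig's lemma or to compactness of the end space directly. The point of passing to the active subtree $A$ is precisely that bounded activity makes $A$ of bounded width, hence locally finite, which is exactly what the branch-counting argument needs. This is where the bounded activity hypothesis is essential and where the infinite-alphabet situation genuinely differs from the classical finite one.
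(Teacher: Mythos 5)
Your proof is correct and follows essentially the same route as the paper: the paper identifies the set of non-$\T$ germs with the projective limit $\varprojlim \{v \in X^n : g_{|v} \neq \Id\}$ and concludes finiteness from the uniform bound on these level sets, which is exactly your bounded-width active subtree with the branch-separation argument spelling out why such a projective limit is finite. The computation for the second claim is identical to the paper's.
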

\begin{proof}
  The $w$ where the germ of $g$ is not in $\T$ are those where the sections along all prefixes are nontrivial. So they can be identified with the projective limit
  $\varprojlim \left\{ v \in X^n \colon g_{|v} \not= \Id \right\}$. Since $g \in \autB(X^*)$, the sets in the limit are uniformly bounded. Hence the projective limit is also finite.
  This proves the first claim. For the second claim, if $(g,w)$ is in $\T$ then $w$ factors as $vu$ with $g_{|v}$ trivial, so $g(w) = g(vu) = g(v)g_{|v}(u) = g(v)u$, so $w$ and $g(w)$ are cofinal.
\end{proof}

\subsection{Bounded Automata}
\begin{definition}
  An automorphism $g \in \Aut(X^*)$ is called a \emph{finite state} automorphism
  if the set of sections $\{ g_{|v} \colon v \in X^* \}$ is finite.

  We denote by $\autBfs(X^*;P)$ the subgroups of finite state automorphisms in $\autB(X^*;P)$. Note that
  every $g \in \autF(X^*)$ is a finite state automorphism.

  An automorphism $g \in \aut(X^*)$ is called \emph{directed} if there is a word $v \in X^n$ with
  $g_{|v} = g$ and $g_{|u} \in \autF(X^*)$ for all $u \in X^n, u \not= v$.
\end{definition}

Every finitary automorphism is a finite state automorphism. A directed automorphism has bounded activity growth.
We will use the following structural result about finite state automata of bounded activity growth, see~\cite{sidki2000}.

\begin{lemma}
  Let $g \in \autB(X^*)$ be a finite state automorphism. Then there exists a $n$ such that for all $v \in X^n$,
  $g_{|v}$ is either directed or finitary.
  \label{lem:structbounded}
  \qedcited
\end{lemma}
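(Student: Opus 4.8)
The plan is to analyze the finite Moore diagram of $g$ and show that bounded activity forces its cyclic structure to be as rigid as possible. Since $g$ is finite state, the set of states $S = \{ g_{|v} : v \in X^* \}$ is finite and closed under taking sections, and each state inherits the activity bound $c$ of $g$, because $\alpha_n(g_{|v}) \le \alpha_{|v|+n}(g) \le c$; in particular each state has at most $c$ letters giving a nontrivial section. We may assume $g$ is not finitary, since otherwise some level is entirely trivial and the claim is immediate. Consider the directed graph $D$ whose vertices are the nontrivial states, with an edge $s \to s_{|x}$ whenever $s_{|x} \ne \Id$. A word $v$ of length $n$ has $g_{|v} \ne \Id$ exactly when every prefix section is nontrivial, so $\alpha_n(g)$ equals the number of length-$n$ walks in $D$ starting at $g$, counted with letter multiplicity. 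A state is finitary precisely when no cycle of $D$ is reachable from it; write $\mathcal{N} \subseteq S$ for the non-finitary states. If $g_{|v}$ is non-finitary then every prefix section is non-finitary as well (a section of a finitary automaton is finitary), so non-finitary deep sections correspond exactly to walks staying inside the induced subgraph $D_{\mathcal N}$ on $\mathcal N$; in particular the number of such walks of length $n$ is at most $\alpha_n(g) \le c$.

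First I would establish the key structural claim: on each cycle of $D_{\mathcal N}$ every vertex has a \emph{unique} non-finitary successor, which moreover lies on the same cycle, so the cycles of $D_{\mathcal N}$ are vertex-disjoint and no cycle can reach a different cycle. This is where bounded activity is used, and it is the main obstacle. Suppose a cycle vertex $t$ on a cycle $C$ of length $L$ had a second non-finitary out-edge besides its $C$-successor. If this edge is a second letter again landing on the $C$-successor, then traversing $C$ admits two choices at $t$, producing at least $2^k$ non-finitary closed walks of length $kL$ at $t$, whence $\alpha_{kL}(g) \to \infty$. If instead the extra edge leaves $C$ toward a non-finitary vertex $t'$, then $t'$ reaches some cycle $C'$, and I can form walks that loop around $C$ a variable number $j$ of times before exiting through $t'$ and then padding the remaining length along $C'$; distinct values of $j$ give distinct walks, so the number of length-$n$ walks in $D_{\mathcal N}$ grows at least linearly in $n$. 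Either way this contradicts $\alpha_n(g) \le c$ (prepending a fixed walk from $g$ to $t$, which exists since $g$ is non-finitary), proving the claim.

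With this structure the conclusion is bookkeeping. Each cycle vertex $t$ is directed: letting $v \in X^L$ be the word of the unique letters tracing $C$ once, we get $t_{|v} = t$, while any other $u \in X^L$ first deviates from $v$ at a cycle vertex, where the only non-finitary section is the $C$-successor, so $t_{|u}$ is finitary. Finally, a walk in $D_{\mathcal N}$ cannot revisit a vertex that lies on no cycle, so after at most $|S|$ steps any non-finitary walk from $g$ has reached a cycle vertex, and by the claim it then remains on that cycle. Hence for $n = |S|$, every $v \in X^n$ for which $g_{|v}$ is non-finitary has $g_{|v}$ equal to a cycle vertex, so $g_{|v}$ is directed, while all remaining $g_{|v}$ are finitary. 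This yields the desired level $n$.
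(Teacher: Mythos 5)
Your proof is correct in substance, and it is worth pointing out that the paper itself gives no proof of this lemma: it is stated as a quoted result from~\cite{sidki2000} (hence the end-of-statement marker), where Sidki's analysis of the ``circuit structure'' of bounded finite-state automata is essentially the argument you reconstruct. Your route --- pass to the finite labelled Moore diagram on the nontrivial states, identify $\alpha_n(g)$ with the number of labelled length-$n$ walks from $g$, characterize the finitary states as those from which no cycle is reachable, and show that bounded activity forces every cycle vertex to have a \emph{unique} non-finitary out-letter (necessarily the one along its cycle) --- is the standard one, and every essential step is present: the exponential/linear growth dichotomy at a cycle vertex with two non-finitary out-edges, the resulting vertex-disjointness of cycles, the verification that a cycle vertex is directed because any word deviating from the cycle word first deviates at a cycle vertex and hence falls into a finitary section, and the pigeonhole step showing that a non-finitary walk of length $\card{S}$ must revisit a vertex, hence reach (and then remain on) a cycle. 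Two small points deserve an explicit sentence in a polished write-up. First, your case split at a cycle vertex $t$ (``a second letter landing on the $C$-successor'' versus ``an edge leaving $C$'') is not literally exhaustive: the extra non-finitary edge could land on $t$ itself or on another vertex of $C$; but your second argument applies verbatim to any target $t'$ other than the designated successor, with the reached cycle $C'$ possibly equal to $C$, so nothing is lost. Second, ``padding the remaining length along $C'$'' requires matching lengths modulo the period of $C'$; taking the total length $n$ so that $n$ minus the fixed overhead (the walk from $g$ to $t$, the exit edge, and the path from $t'$ to $C'$) is a multiple of $L$ times the length of $C'$, one still obtains a number of admissible loop counts $j$ growing linearly in $n$, which suffices for the contradiction with $\alpha_n(g)\le c$.
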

\section{Random walks}
\subsection{Potential theoretic background}
We will use the potential theoretic setting as in~\cite{woess2000random}:

Let $\Net = (X,E,r)$ be a network, i.e.\ $(X,E)$ is a connected locally finite graph, and
$r \colon E \rightarrow (0,\infty)$ is a function. We think of $r(e)$ as the resistance
of $e$ and denote by $a(e) = 1/r(e)$ the conductivity of $e$. If $Y$ is a subset of $X$, we denote by $\charr_Y \colon X \rightarrow \{0,1\}$ the characteristic function of $Y$.

Our main examples
will be Schreier graphs: if $G$ is a group generated by a finite set $S$ and $G$
has a left action on $X$, then $\Gamma(G,S,X)$ is the graph with vertex set $X$
and edges $x \rightarrow s(x)$ for every $x \in X, s \in S$, all of unit
resistance. We allow parallel edges and loops.

We are mostly interested in the space $\D(\Net)$ of functions $f \colon X
\rightarrow \R$ with finite
Dirichlet energy $D(f) = \sum_{e \in E} a(e){\left(f(e^+)-f(e^-)\right)}^2$. For any choice
of base point $o$, $\D(\Net)$ is a Hilbert space with norm $\norm{f}^2_{D,o} =
D(f) + \norm{f(o)}^2$. All choices of $o$ give equivalent norms, so there is a
well-defined topology on $\D(\Net)$, so that $f_n$ converges to $f$ if and only
if $\lim_n D(f_n - f) = 0$ and $f_n$ converges to $f$ point-wise.

Let $\D_0(\Net)$ be the closure of functions with finite support in $\D(\Net)$.
By~\cite[Theorem I.2.12]{woess2000random}, the random walk on $\Net$ is recurrent if and
only if $\charr_X \in \D_0(\Net)$. We also use $\D_0(\Net)$ to get the following shorting criterion.
\begin{lemma}[{\cite[I.2.19]{woess2000random}}]
  Let $X = \bigcup_{i \in I} X_i$ be a partition of $X$ such that $\charr_{X_i}
  \in \D_0(\Net)$ for all $i \in I$. Consider the \textbf{shorted network} $\Net'$
  with vertex set $I$ and conductivity $a'(i,j) = \sum_{x \in X_i, y \in
    X_j}(a(x,y))$ for $i\not=j$, $a'(i,i) = 0$.
  If $\Net'$ is recurrent then so is $\Net$.
  \label{lem:shorten}
  \qedcited
\end{lemma}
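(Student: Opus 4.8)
The plan is to use the recurrence criterion recalled just above, namely that $\Net$ is recurrent if and only if $\charr_X \in \D_0(\Net)$. Since $\Net'$ is assumed recurrent, we have $\charr_I \in \D_0(\Net')$, so there is a sequence of finitely supported functions $\phi_n \colon I \to \R$ converging to $\charr_I$ in the $\D(\Net')$-topology, that is $D_{\Net'}(\phi_n - \charr_I) \to 0$ together with pointwise convergence. My goal is then to transport this approximation to $X$ and conclude $\charr_X \in \D_0(\Net)$.

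The key device is the pullback along the projection $\pi \colon X \to I$ sending $x$ to the index $i$ with $x \in X_i$. For $\psi \colon I \to \R$ put $\tilde\psi = \psi \circ \pi$, so that $\tilde\psi$ is constant equal to $\psi(i)$ on each block $X_i$. Edges lying inside a single block then contribute nothing to the Dirichlet energy of $\tilde\psi$, while the edges joining $X_i$ to $X_j$ contribute $(\psi(i)-\psi(j))^2$ times their total conductivity, which by the definition of the shorted network is exactly $a'(i,j)$. Hence the lift is energy-preserving,
\[
  D_{\Net}(\tilde\psi) = D_{\Net'}(\psi),
\]
so the linear map $\psi \mapsto \tilde\psi$ carries $\D(\Net')$ into $\D(\Net)$ continuously and sends the constant function $\charr_I$ to $\charr_X$. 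Applying it to the approximating sequence gives $\tilde\phi_n \to \charr_X$ in $\D(\Net)$, since $D_{\Net}(\tilde\phi_n - \charr_X) = D_{\Net'}(\phi_n - \charr_I) \to 0$ and pointwise convergence is inherited from that of $\phi_n$.

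The subtle point — and precisely where the hypothesis $\charr_{X_i} \in \D_0(\Net)$ is indispensable — is that $\tilde\phi_n$ is in general \emph{not} finitely supported on $X$, because each block $X_i$ may be infinite, so membership in $\D_0(\Net)$ cannot be read off from its support. Instead, since $\phi_n$ is finitely supported we write $\tilde\phi_n = \sum_i \phi_n(i)\,\charr_{X_i}$ as a \emph{finite} linear combination of the block characteristic functions. Each summand lies in $\D_0(\Net)$ by assumption, and $\D_0(\Net)$ is a linear subspace, so $\tilde\phi_n \in \D_0(\Net)$ for every $n$; as $\D_0(\Net)$ is closed and $\tilde\phi_n \to \charr_X$, we obtain $\charr_X \in \D_0(\Net)$, whence $\Net$ is recurrent. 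I expect the main obstacle to be exactly this reduction: the naive lift of a finitely supported function escapes the finite-support functions, and it is the closedness of $\D_0(\Net)$ combined with the assumption on each $\charr_{X_i}$ that keeps the whole argument inside $\D_0(\Net)$.
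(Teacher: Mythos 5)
The paper offers no proof of this lemma---it is quoted from \cite[I.2.19]{woess2000random} with a \(\qedcited\)-style citation mark---so there is nothing internal to compare against; your argument is correct and is precisely the standard proof of the shorting criterion. The two points you flag are indeed the ones that matter: the energy identity \(D_{\Net}(\psi\circ\pi)=D_{\Net'}(\psi)\) for the block projection \(\pi\), and the observation that the lift of a finitely supported \(\phi_n\) is the \emph{finite} linear combination \(\sum_i\phi_n(i)\,\charr_{X_i}\), which lies in the closed subspace \(\D_0(\Net)\) by the hypothesis on the \(\charr_{X_i}\) rather than by having finite support.
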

As a special case we want to mention the Nash-Williams criterion~\cite{nashwilliams}:
\begin{lemma}[{\cite[I.2.20]{woess2000random}}]
  Let $X_1 \subset X_2 \subset \dots$ be an increasing chain of subsets of $X$ with $X_i \in \D_0(\Net)$,
  such that $\partial X_i \subset X_{i+1}$, and $\bigcup X_i = X$. Let $a'_i \coloneqq \sum_{x \in X_i, y \in X \setminus X_i} a(x,y)$.
  If $\sum \frac{1}{a'_i} = \infty$ then $\Net$ is recurrent.
  \label{lem:nashwilliams}
  \qedcited
\end{lemma}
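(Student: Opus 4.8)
The plan is to reduce the statement to the recurrence of a one–dimensional (half-line) network by applying the shorting criterion of Lemma~\ref{lem:shorten}, and then to verify recurrence of that half-line directly through the Dirichlet-energy criterion $\charr_X \in \D_0(\Net)$ of~\cite[Theorem I.2.12]{woess2000random}.

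First I would organize $X$ into shells. Set $Y_0 = X_1$ and $Y_i = X_{i+1} \setminus X_i$ for $i \geq 1$; these partition $X$ because the $X_i$ increase with union $X$. Since $\D_0(\Net)$ is the closure of the finitely supported functions, it is a linear subspace, and $\charr_{X_i} \in \D_0(\Net)$ by hypothesis, so each $\charr_{Y_i} = \charr_{X_{i+1}} - \charr_{X_i}$ (and $\charr_{Y_0}=\charr_{X_1}$) again lies in $\D_0(\Net)$. Hence the partition $\{Y_i\}$ satisfies the hypothesis of Lemma~\ref{lem:shorten}, and it suffices to prove that the shorted network $\Net'$ on the index set $\{0,1,2,\dots\}$ is recurrent.

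Next I would identify $\Net'$ explicitly. The crucial input is the boundary condition $\partial X_i \subset X_{i+1}$: it forbids edges from $X_i$ to $X \setminus X_{i+1}$, so any edge $(x,y)$ crossing the cut $(X_i, X\setminus X_i)$ must have $y \in Y_i$, and — as a neighbour of a point of $X_{i-1}$ would already lie in $X_i$ — also $x \in Y_{i-1}$. Consequently only consecutive shell-vertices are joined in $\Net'$, and the conductance between vertices $i-1$ and $i$ is exactly $a'_i$. Thus $\Net'$ is the half-line with vertices $0,1,2,\dots$ whose edge from $i-1$ to $i$ has resistance $1/a'_i$, and by Lemma~\ref{lem:shorten} recurrence of this half-line will finish the proof.

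Finally, for the half-line I would exhibit approximants of $\charr_X$. Writing $R_n = \sum_{i=1}^n 1/a'_i$, the hypothesis $\sum 1/a'_i = \infty$ gives $R_n \to \infty$. I would define $f_n$ to be constant on each shell, equal to $\max(1 - R_i/R_n, 0)$ on $Y_i$; this is a finite combination of the $\charr_{Y_i}$, hence lies in $\D_0(\Net)$, it converges pointwise to $\charr_X$, and a telescoping computation gives $D(f_n) = 1/R_n \to 0$. Since $\charr_X$ is constant we have $D(f_n - \charr_X) = D(f_n) \to 0$, so $f_n \to \charr_X$ in $\D(\Net)$ and therefore $\charr_X \in \D_0(\Net)$, yielding recurrence. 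I expect the main obstacle to be the bookkeeping of the cut structure: extracting from the boundary hypothesis exactly the nearest-neighbour form of $\Net'$ and the identification of its cross-conductances with $a'_i$; once that is in place, the energy estimate is a routine telescoping sum.
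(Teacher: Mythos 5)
The paper does not prove this lemma itself: it is quoted from \cite[Theorem~I.2.20]{woess2000random} and the proof is deferred to that reference, which derives it from the shorting criterion exactly as you do. Your argument is correct and is essentially that standard derivation: the hypothesis $\partial X_i \subset X_{i+1}$ forces every edge leaving $X_i$ to join $X_i\setminus X_{i-1}$ to $X_{i+1}\setminus X_i$, so shorting the shells via Lemma~\ref{lem:shorten} yields a half-line with resistances $1/a'_i$, and your explicit approximants with $D(f_n)=1/R_n\to 0$ correctly verify $\charr_X\in\D_0(\Net)$.
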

We will also use the following lemma.
\begin{lemma}
  Let $\Net = (X,E,r)$ be a network, $Y \subset X$ with $\partial Y$ finite. Suppose $\Net' = (Y, E', r')$ is a network
  on $Y$ obtained from $\Net$ by restricting to $Y$ and adding and removing finitely many edges and changing finitely many resistances.

  Suppose $\Net'$ is a recurrent network. Then $\charr_Y$  is in  $\D_0(\Net)$.
  \label{lem:technical}
\end{lemma}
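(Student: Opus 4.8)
The plan is to produce finitely supported approximations of $\charr_Y$ directly from the recurrence of $\Net'$. Since $\Net'$ is recurrent and has vertex set $Y$, the recurrence criterion~\cite[Theorem~I.2.12]{woess2000random} gives that the constant function $1$ on $Y$ lies in $\D_0(\Net')$; that is, there are finitely supported $g_n \colon Y \to \R$ with $g_n \to 1$ pointwise and $D_{\Net'}(g_n - 1) \to 0$. I would extend each $g_n$ by zero to a finitely supported $\hat g_n \colon X \to \R$ and show $\hat g_n \to \charr_Y$ in the topology of $\D(\Net)$. Since finitely supported functions lie in $\D_0(\Net)$ and $\D_0(\Net)$ is closed, this gives $\charr_Y \in \D_0(\Net)$.

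Two preliminary points are easy. First, $\charr_Y$ has finite Dirichlet energy on $\Net$: the only edges on which $\charr_Y$ is non-constant are those crossing $\partial Y$, and these are finite in number because $\partial Y$ is finite and $(X,E)$ is locally finite. Second, $\hat g_n \to \charr_Y$ pointwise, since $g_n \to 1$ on $Y$ while both functions vanish on $X \setminus Y$. The substance is the energy estimate $D_\Net(\hat g_n - \charr_Y) \to 0$.

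Write $h_n = \hat g_n - \charr_Y$, so that $h_n = g_n - 1$ on $Y$ and $h_n = 0$ on $X \setminus Y$. In the edge sum defining $D_\Net(h_n)$ the edges with both endpoints in $X \setminus Y$ contribute nothing. I would split the rest into boundary edges and internal edges of $Y$. There are finitely many boundary edges; across such an edge from $y \in Y$ to $z \in X \setminus Y$ the increment of $h_n$ is $g_n(y) - 1$, which tends to $0$ for each of the finitely many boundary vertices $y$, so the boundary contribution vanishes in the limit. On the internal edges the increments of $h_n = g_n - 1$ coincide with those of $g_n$, so by translation invariance of the Dirichlet form the internal energy equals $D_{\Net'}(g_n) = D_{\Net'}(g_n - 1)$ up to the correction coming from the finitely many edges and resistances by which $\Net'$ differs from the restriction of $\Net$ to $Y$. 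Each correction term is a bounded multiple of $(g_n(u) - g_n(v))^2$ for a fixed edge $(u,v)$, and hence tends to $0$ by pointwise convergence. Combining the three pieces gives $D_\Net(h_n) \to 0$.

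The only delicate point is the bookkeeping in the internal-edge comparison: one must check that, after discarding the finitely many added, removed, and re-weighted edges, the remaining internal edges of $\Net$ and of $\Net'$ match exactly in both endpoints and conductivity, so that only finitely many correction terms survive. Once this is pinned down, every surviving correction term and every boundary term is controlled purely by the pointwise convergence $g_n \to 1$ at the finitely many vertices involved, and the estimate follows by choosing $n$ large enough that all these finitely many values lie within $\varepsilon$ of $1$.
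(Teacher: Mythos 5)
Your proposal is correct and follows essentially the same route as the paper: use recurrence of $\Net'$ to get finitely supported approximants of $\charr_Y$ in $\D_0(\Net')$, extend by zero, and observe that the Dirichlet energies on $\Net$ and $\Net'$ differ only in finitely many summands (boundary edges plus the finitely many modified edges), each of which vanishes by pointwise convergence. Your version merely spells out the bookkeeping of which summands differ more explicitly than the paper does.
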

\begin{proof}
  Since $\Net'$ is recurrent, $\charr_Y$ is in $\D_0(\Net')$. So there is a
  sequence $f_n \colon Y \rightarrow \R$ such that $\lim_n D_{\Net'}(f_n -
  \charr_Y) = 0$ and $f_n \rightarrow 1$ point-wise on $Y$.

  We extend $f_n$ to $X$ by $0$.  Then $D_{\Net'}(f_n -
  \charr_Y)$ and $D_{\Net}(f_n - \charr_Y)$ differ in only finitely many
  summands, and these go to 0 by point-wise convergence of the $f_n$. So we have
  $\lim_n D_{\Net}(f_n - \charr_Y) = 0$ and thus $\charr_Y \in \D_0(\Net)$.
\end{proof}

\subsection{Recurrence on orbital Schreier graphs}

\begin{definition}
  Let $A$ be a group, $L$ a left $A$-set. We say that the action of $A$ on $L$ is recurrent if for all finitely supported symmetric measures $\lambda$ on $A$,
  the random walk on $L$ induced by $\lambda$ is recurrent for all starting points $l_0 \in L$.
  \label{def:recurrent}
\end{definition}
\begin{remark}
	If $A$ is finitely generated, it is enough to show this for one finitely supported symmetric measure whose support generates $A$. If $S$ is a finite generating set of $A$,
	it is enough to consider the simple random walk on the Schreier graph $\Gamma(G,S,X)$. See for example~\cite{woess2000random}. With this definition it is also clear that 
	recurrent actions are closed under taking subgroups.
	\label{rem:recurrentsubgroups}
\end{remark}

\begin{lemma}
  Let $A, B$ are groups, $L$ a left $A$-set, $M$ a left
  $B$-set such that the actions are both recurrent. Then the action of
  $\wwr{B}{A}{L}$ on $L \times M$ is also recurrent.
  \label{lem:recurrentwr}
\end{lemma}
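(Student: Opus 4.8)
The plan is to fix a finitely supported symmetric measure $\mu$ on $G \coloneqq \wwr{B}{A}{L}$ with symmetric support $S$, and to show that the random walk it induces on $L \times M$ is recurrent from an arbitrary base point $(l_0, m_0)$; by Definition~\ref{def:recurrent} this suffices. Writing an element of $G$ as a pair $(f, a)$ with $f \in \bigoplus_{l \in L} B$ and $a \in A$, the (imprimitive) action is $(f,a) \cdot (l,m) = (a l,\, f(a l)\, m)$, so the walk projects through $G \to A$ to the recurrent walk of $\bar A \coloneqq \langle a : (f,a) \in S\rangle$ on $L$. I work on the orbital Schreier network $\Net$ of $\langle S\rangle$ on the orbit $O$ of $(l_0,m_0)$. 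The single feature that makes everything go through is that the \emph{active set} $Z \coloneqq \bigcup_{(f,a) \in S} \supp(f) \subseteq L$ is finite: this is exactly where the restricted wreath product (finitely supported $f$) is used. Indeed an edge $(l,m) \to (al, f(al)m)$ changes the $M$-coordinate only when $al \in Z$, so every $M$-changing edge is incident to the finite strip $Z \times M$.

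First I would slice $O$ along the $M$-coordinate, $O = \bigsqcup_{m} O^{(m)}$ with $O^{(m)} = O \cap (L \times \{m\})$. Because only active edges cross between different levels and these all meet $Z \times M$, each $O^{(m)}$ has finite boundary $\partial O^{(m)}$ in $\Net$. Restricting $\Net$ to $O^{(m)}$ and identifying it with a subset of $L$ via the projection, the within-level network agrees, up to finitely many edges, with the subgraph of the orbital Schreier graph of the $\bar A$-action on $L$ induced on this vertex set (the passive edges reproduce the $\bar A$-action, and only the finitely many edges through $Z$ are altered or deleted). Since $\bar A$ acts recurrently on $L$ by Remark~\ref{rem:recurrentsubgroups}, and recurrence is preserved both by passing to an induced subgraph (removing edges, Rayleigh monotonicity) and by changing finitely many edges, the within-level network is recurrent. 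Lemma~\ref{lem:technical} then yields $\charr_{O^{(m)}} \in \D_0(\Net)$ for every $m$.

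Next I would apply the shorting criterion, Lemma~\ref{lem:shorten}, to the partition $\{O^{(m)}\}_{m \in \pi_M(O)}$. The shorted network $\Net'$ lives on $\pi_M(O) \subseteq M$, and its conductance between $m$ and $m'$ is the total weight of the active cross-level edges, which is nonzero exactly when $m' = f(w)m$ for some $(f,a) \in S$ and $w \in Z$. Hence the edges of $\Net'$ are contained in those of the orbital Schreier graph of $B' \coloneqq \langle f(w) : (f,a) \in S,\ w \in Z\rangle \le B$, a finitely generated subgroup (the generating set $F' \coloneqq \{f(w)\}$ being finite), and the conductances of $\Net'$ lie between two fixed positive constants since $S$ is finite. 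As $B$ acts recurrently on $M$, so does $B'$ by Remark~\ref{rem:recurrentsubgroups}, so this Schreier graph is recurrent; by Rayleigh monotonicity $\Net'$ is recurrent as well. Lemma~\ref{lem:shorten} then gives that $\Net$ is recurrent, completing the argument.

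The main obstacle is setting up the reduction so that the two hypotheses feed cleanly into the two potential-theoretic tools, and the decisive choice is to short along the $M$-coordinate rather than the $L$-coordinate. The $L$-fibers $\{l\} \times M$ each have infinitely many outgoing edges, so their characteristic functions have infinite Dirichlet energy and the shorting criterion cannot be applied; by contrast the $M$-levels have finite boundary precisely because $Z$ is finite. The remaining care is bookkeeping for the finitely many active edges: verifying that altering them does not destroy the recurrence that the levels inherit from the $A$-action, and that the shorted network is genuinely comparable to the recurrent Schreier graph of $B'$ on $M$.
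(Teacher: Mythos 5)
Your proof is correct and follows essentially the same route as the paper: partition the orbit into the slices $L\times\{m\}$ (which have finite boundary precisely because the restricted product forces a finite active set), get $\charr$ of each slice into $\D_0(\Net)$ from recurrence of the $A$-action via Lemma~\ref{lem:technical}, and then apply the shorting criterion of Lemma~\ref{lem:shorten} to a network comparable to a Schreier graph of the $B$-action on $M$. The only difference is that the paper first reduces to transitive actions and the normalized generating set $\iota(S)\cup T@ l_0$, so that the active set is the single point $l_0$; this trivializes the bookkeeping you carry out for a general finite active set $Z$ (and renders explicit the finitely-many-components issue for each slice, which you gloss over but which is handled exactly as in Lemma~\ref{lem:recurrentautb}).
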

\begin{proof}
  Let us first reduce to the case where $A$ and $B$ are both finitely generated and both actions
  are transitive:

  Let $(l,m) \in L \times M, \lambda$ a symmetric finitely supported measure on $\wwr{B}{A}{L}$.
  Then there are finitely generated subgroups $A' \subset A, B' \subset B$ such that $\supp(\lambda) \subset \wwr{B'}{A'}{L}$.
  So wlog.\ let $A$ and $B$ be finitely generated. Let $L'$ be the orbit of $l$.
  Then we have a quotient map $\pi \colon \wwr{B}{A}{L} \rightarrow \wwr{B}{A}{L'}$ and we can replace $\lambda$ by
  $\pi_*(\lambda)$ to assume wlog.~that the action of $A$ on $L$ is transitive. We can easily replace $M$ with the orbit of $m$.

  We can now assume that $S$ and $T$ are finite generating sets of $A$ and $B$
  respectively, and both actions are transitive.  Instead of showing recurrence
  for arbitrary $\lambda$, we can now fix a preferred generating set of
  $\wwr{B}{A}{L}$ and show recurrence of the simple random walk on the Schreier
  graph.

  Fix any base point $l_0 \in L$. We take as our generating set of
  $\wwr{B}{A}{L}$ the set $\iota(S) \cup T@ l_0$, let $\Net$ be the resulting network on the Schreier graph. 
  We use Nash-Williamson criterion by partitioning $L \times M = \bigcup_{m \in
    M} L \times {m}$. Now $\partial (L \times {m})$ is a finite collection of edges at $(l_0,m)$,
    and the random walk on $\Gamma(A,S,L)$ is recurrent. So by Lemma~\ref{lem:technical},
    obtain $\charr_{L \times {m}} \in \D_0(\Net)$.
    The shorted network is the Schreier graph of $M$ with respect to $T$, so it is also recurrent. By Lemma~\ref{lem:shorten},
the network $\Net$ is also recurrent.
\end{proof}

\begin{lemma}
  Let $G$ be a finitely generated subgroup of $\autB(X^*)$. Assume that the
  action of $G$ on every finite level is recurrent. Then the
  action of $G$ on every component of the orbital Schreier graph is recurrent.
  \label{lem:recurrentautb}
\end{lemma}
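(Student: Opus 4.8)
The plan is to reduce to a single orbit and then run the Nash--Williams criterion (Lemma~\ref{lem:nashwilliams}) along the tail-equivalence filtration, using bounded activity to bound the cuts and finite-level recurrence to control the interiors. By Remark~\ref{rem:recurrentsubgroups} it suffices to fix a finite symmetric generating set $S$ of $G$ and to prove recurrence of the simple random walk on one component of the orbital Schreier graph, i.e.\ on the network $\Net$ whose vertex set is a single orbit $\Omega = G \cdot w_0$. By Lemma~\ref{lem:autbcofinal}, each generator $s \in S$ has only finitely many $w$ with $(s,w) \notin \T$, and whenever $(s,w) \in \T$ the points $w$ and $s(w)$ are cofinal. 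Hence all but finitely many edges of $\Net$ join cofinal points, so $\Omega$ meets only finitely many cofinality classes, joined by only finitely many edges. Since recurrence is unaffected by finitely many edges, I may assume $\Omega \subseteq T(w_0)$ lies in a single cofinality class.

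Next I would set up the exhaustion $X_i = \Omega \cap T_{n_i}(w_0)$ for a suitable increasing sequence $n_i \to \infty$; since $\Omega \subseteq T(w_0) = \bigcup_n T_n(w_0)$, these exhaust $\Omega$. The crucial point is that an edge $w \to s(w)$ leaves $T_n(w_0)$ only if it changes the tail of $w$ beyond position $n$, which forces the section $s_{|v}$ along the length-$n$ prefix $v$ of $w$ to be nontrivial. As $G$ has bounded activity, the number of such active prefixes is $\sum_{s \in S}\alpha_n(s) \le C$ for a constant $C$ independent of $n$. Therefore the cut $a'_i = \sum_{x \in X_i,\, y \notin X_i} a(x,y)$ satisfies $a'_i \le C$ for all $i$, so $\sum_i 1/a'_i = \infty$.

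It remains to verify $\charr_{X_i} \in \D_0(\Net)$, and this is where finite-level recurrence enters and where the main obstacle lies. Write $t$ for the common tail, so that every element of $T_{n_i}(w_0)$ has the form $v t$ with $v \in X^{n_i}$. The map $v t \mapsto v$ identifies $X_i$ with a subset $V_i \subseteq X^{n_i}$, and the edges of $\Net$ internal to $X_i$ (those preserving the tail) correspond to edges of the level-$n_i$ Schreier graph $\Gamma(\rho_{n_i}(G), S, X^{n_i})$ on $V_i$, save for the at most $C$ active edges, which instead leave $T_{n_i}(w_0)$. By hypothesis this level graph is recurrent; an induced subgraph of a recurrent network is recurrent by Rayleigh monotonicity, and deleting the finitely many active edges preserves recurrence. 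Since $\partial X_i$ is finite (it consists of the boundedly many active sites, each cofinal with $w_0$), Lemma~\ref{lem:technical} yields $\charr_{X_i} \in \D_0(\Net)$.

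Finally, as each $\partial X_i$ is a finite subset of $T(w_0) = \bigcup_n T_n(w_0)$, I can pass to a subsequence of levels ensuring $\partial X_i \subseteq X_{i+1}$, so that the hypotheses of Lemma~\ref{lem:nashwilliams} are met and $\Net$ is recurrent. The delicate step throughout is the bookkeeping that matches the interior of each tail cut with a piece of a finite-level Schreier graph while keeping the boundary finite and its image under the dynamics controlled; this is exactly the point at which both hypotheses get consumed, bounded activity to keep the cuts uniformly bounded and finite-level recurrence to make the interiors recurrent.
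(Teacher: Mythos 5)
Your argument is correct and follows essentially the same route as the paper: use Lemma~\ref{lem:autbcofinal} to confine the component to finitely many cofinality classes, bound the tail-equivalence cuts by the activity bound $K\card{S}$, identify the interiors of the cuts with subnetworks of the recurrent finite-level Schreier graphs to get $\charr_{X_i} \in \D_0(\Net)$ via Lemma~\ref{lem:technical}, and conclude with Nash--Williams. The only cosmetic difference is that you reduce to a single cofinality class at the outset (a step that itself relies on the same Lemma~\ref{lem:technical} gluing you invoke later, since deleting the inter-class edges may disconnect $\Omega$), whereas the paper keeps all finitely many classes and sums the characteristic functions of the finitely many components of each $T_m(w_i)\cap\Omega$; both versions work.
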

\begin{proof}
  Let $S$ be a finite symmetric generating set of $G$.
  Let $K > 0$ be a uniform bound on $\alpha_n(s)$ for all $n\in \N, s \in S$.
  Let $\Omega$ be a component of the orbital Schreier graph. Let $\Net$ the 
  network on associated with the simple random walk on $\Omega$.

  Let $E$ be the set of edges in $\Net$ which go between different cofinality classes. 
  By Lemma~\ref{lem:autbcofinal}, $E$ is finite. Since $\Omega$ is connected, its vertex set
  must by contained in finitely many cofinality classes $C_1, \dots C_n$. Choose
  representatives $w_i \in C_i \cap \Omega$.

  We claim that $\partial T_m(w_i)$ is uniformly bounded by $K\card{S}$: in fact, if $u$ is the $m$-tail of $w_i$,
  then $\partial T_m(w_i)$ can be identified with the set $\left\{ (s, v) \in S \times X^m \colon s_{|v}(u) \not= u \right\}$.
  This set is contained in $\left\{ (s, v) \in S \times X^m \colon s_{|v} \not= \Id \right\}$, so the bound is clear.

  Since $\Omega$ is connected, $\partial(T_m(w_i) \cap \Omega)$ is also
  uniformly bounded by $K\card{S}$ and $T_m(w_i) \cap \Omega$ has only finitely
  many components. Each such component is a subnetwork of the (recurrent)
  random walk of $G$ on level $m$, so by Lemma~\ref{lem:technical}, their characteristic functions are in $\D_0(\Net)$.

  Let $X_m \coloneqq \bigcup_{1\leq i \leq n} T_m(i) \cap \Omega$. Then $\charr_{X_m}$ is the finite sum of
  characteristic functions of components of $T_m(w_i) \cap \Omega$, so we obtain $\charr_{X_m} \in \D_0(\Net)$.
  Also, $\partial X_m \subset \bigcup_{1 \leq i \leq n} \partial(T_m(w_i) \cap \Omega)$, so
  $\partial X_m$ is uniformly bounded by $nK \card{S}$.

  We can now take a subsequence $X_{m_i}$ such that $\partial X_{m_i}$ is properly contained in $X_{m_{i+1}}$.
  By applying Lemma~\ref{lem:nashwilliams} to the sequence $X_{m_i}$, the random walk on $\Net$ is recurrent.
\end{proof}

\section{Amenability of groups generated by bounded activity automata}
In this section we will prove the following theorem:
\setcounter{theorem}{1}
\begin{theorem}
  Let $P$ be an amenable subgroup of $\Sym(X)$. Suppose that the action of $P$ on $X$ is recurrent.
  Then $\autBfs(X^*;P)$ is amenable.
  \label{thm:maintheorem}
\end{theorem}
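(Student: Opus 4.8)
The plan is to invoke the extensive amenability machinery from \cite{juschenko2016}, which has already been used to handle the finite-alphabet case. The overall strategy mirrors the proof that bounded automata groups on finite alphabets are amenable: one reduces amenability of $\autBfs(X^*;P)$ to amenability of a suitable subgroup together with the recurrence of an associated action on the space of ends. The key abstract tool is the criterion stating that if a group $G$ acts on a set $\Omega$ such that the action is extensively amenable and the stabilizers (or point-wise actions) are amenable, then $G$ itself is amenable; and that extensive amenability follows once one exhibits a recurrent action.

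First I would set up the inductive skeleton. Using Lemma~\ref{lem:structbounded}, every finite state bounded automaton eventually becomes, at some level, either directed or finitary on each subtree. This lets me decompose $\autBfs(X^*;P)$ via the wreath recursion $\autB(X^*;P) \cong \wwr{\autB(X^*;P)}{P}{X}$, isolating the finitary part $\autF(X^*;P)$ (which is locally finite, hence amenable, being a directed union of iterated wreath products of the amenable group $P$) from the genuinely self-similar directed generators. The aim is to realize $\autBfs(X^*;P)$ as generated by $\autF(X^*;P)$ together with finitely many directed elements, and to study the action of the whole group on the cofinality classes $T(w) \subset X^\omega$, where the directed elements act by ``shifting'' tails while the finitary part acts with bounded support.

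Next I would connect recurrence to extensive amenability. The bridge is Lemma~\ref{lem:recurrentautb}: since $P$ acts recurrently on $X$, the iterated wreath products $P_n$ act recurrently on $X^n$ (via repeated application of Lemma~\ref{lem:recurrentwr}), so any finitely generated subgroup of $\autB(X^*;P)$ acts recurrently on every level, and therefore recurrently on each component of the orbital Schreier graph. The point of \cite{juschenko2016} is that a recurrent action is extensively amenable; this is precisely the hypothesis that upgrades recurrence of the first level action into the amenability conclusion. So I would take an arbitrary finitely generated subgroup $G \subset \autBfs(X^*;P)$, apply Lemma~\ref{lem:recurrentautb} to obtain recurrence of its action on the cofinality classes, deduce extensive amenability of that action, and then combine with amenability of the relevant stabilizers (which are controlled by $\autF$ and the amenable $P$) to conclude that $G$ is amenable. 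Since amenability is a local property, amenability of all finitely generated subgroups yields amenability of $\autBfs(X^*;P)$.

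The main obstacle I anticipate is twofold. First, identifying the correct set $\Omega$ on which the action is both extensively amenable and has amenable stabilizers: on infinite alphabets $X^\omega$ is not locally compact, and $\autF(X^*) \subsetneq [[\T]]$, so the clean correspondence between germs and tail equivalence that holds for finite alphabets breaks down. One must check carefully that Lemma~\ref{lem:autbcofinal} — guaranteeing that bounded automata move only finitely many ends outside $\T$ and preserve cofinality on $\T$ — suffices to keep the action of the directed generators within a single cofinality class, so that the orbital Schreier graph decomposes as in Lemma~\ref{lem:recurrentautb}. Second, verifying the precise hypotheses of the \cite{juschenko2016} criterion in the infinite-alphabet setting: the recurrence we obtain is for \emph{every} finitely supported symmetric measure, as in Definition~\ref{def:recurrent}, and I would need to confirm that this is exactly the form of recurrence the extensive amenability implication requires, and that the stabilizer subgroups arising from the decomposition are genuinely amenable rather than merely locally finite on each level. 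Handling the passage from the finite-state structural result to a uniform control valid across all finitely generated subgroups is where the technical weight of the argument will lie.
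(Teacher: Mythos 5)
Your proposal follows essentially the same route as the paper: reduce to finitely generated subgroups, apply the recurrent-groupoid criterion of~\cite{juschenko2016} to the action on $X^\omega$ with the tail-equivalence groupoid $\T$, identify $\left[\left[\T\right]\right]\cap G$ with the finitary (hence amenable) part via Lemma~\ref{lem:structbounded}, and obtain recurrence of the orbital Schreier graphs from Lemmas~\ref{lem:recurrentwr} and~\ref{lem:recurrentautb}. The only step you leave genuinely unspecified is the amenability of the isotropy groups at the singular (eventually constant) ends, which the paper settles by first normalizing the generating set so that every directed generator is directed along a constant path and then embedding $\G_w$ into $\wwr{\autF(X^*;P)}{P_x}{X\setminus\left\{ x \right\}}$, where $P_x$ is the stabilizer of $x$ in $P$.
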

We will use the following criterion:
\begin{theorem}[Theorem~3.1 in~\cite{juschenko2016}]
  Let $G$ be a finitely generated group of homeomorphisms of a topological space $Y$, and $\G$
  be its groupoid of germs. Let $\HH$ be a groupoid of germs of homeomorphisms of $Y$. Suppose that the following conditions hold:
  \begin{enumerate}
    \item The group $\left[\left[ \HH \right]\right] \cap G$ is amenable. 
    \item For every $g \in G$ the germ of $g$ at $y$ belongs to $\HH$ for all but finitely many $y \in Y$. We say that $y \in Y$ is \emph{singular}
        if there exists $g \in G$ such that $(g,y) \notin \HH$.
    \item For every singular point $y \in Y$ the orbital Schreier graph $\Gamma(y,G)$ is recurrent.
    \item The isotropy groups $\G_y$ are amenable.
  \end{enumerate}
  Then the group $G$ is amenable.
  \label{thm:JNdlS}
  \qedcited
\end{theorem}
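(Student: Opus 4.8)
The plan is to deduce the statement from the amenability criterion of Theorem~\ref{thm:JNdlS}. Since amenability is a local property, it suffices to prove that every finitely generated subgroup $G \subseteq \autBfs(X^*;P)$ is amenable, so I fix such a $G$ with a finite symmetric generating set. I apply Theorem~\ref{thm:JNdlS} with $Y = X^\omega$ the space of ends, $\G$ the groupoid of germs of the action of $G$ on $X^\omega$, and $\HH = \T$ the groupoid of germs of tail equivalences. It then remains to verify the four hypotheses, and the structure of the argument is that the recurrence assumption on $P$ drives hypothesis~(3), while the amenability assumption on $P$ drives hypotheses~(1) and~(4).

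Hypothesis~(2) is exactly the content of Lemma~\ref{lem:autbcofinal}: every $g \in G \subseteq \autB(X^*)$ has only finitely many germs outside $\T$, so its germ at $y$ lies in $\T$ for all but finitely many $y$. For hypothesis~(3) I use the recurrence of $P$. Iterating Lemma~\ref{lem:recurrentwr} along the decomposition $P_{n+1} = \wwr{P_n}{P}{X}$ acting on $X^{n+1} = X \times X^n$, starting from the recurrent action of $P = P_1$ on $X$, shows that each $P_n$ acts recurrently on $X^n$. The level-$n$ action of $G$ factors through a subgroup of $P_n$, so by Remark~\ref{rem:recurrentsubgroups} the action of $G$ on every level $X^n$ is recurrent. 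Lemma~\ref{lem:recurrentautb} then gives recurrence of the action of $G$ on every component of the orbital Schreier graph on $X^\omega$; in particular the orbital graph of every singular point is recurrent, which is hypothesis~(3).

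For hypothesis~(1) I claim that $g \in \left[\left[\T\right]\right] \cap G$ precisely when $g$ has no singular points, i.e.\ along every ray some section of $g$ is trivial. Since $g \in \autB(X^*)$ has a uniformly bounded number of nontrivial sections per level, the set of vertices where $g$ has nontrivial section is a finitely branching subtree with no infinite branch, so by K\"onig's lemma it is finite and $g$ is finitary. Hence $\left[\left[\T\right]\right] \cap G \subseteq \autF(X^*;P)$, and the latter is the increasing union of the copies of $P_n$ given by the depth-$\leq n$ finitary automorphisms; as each $P_n$ is amenable and amenability passes to increasing unions and subgroups, hypothesis~(1) holds.

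It remains to verify hypothesis~(4). First, a germ in $\T$ fixing its base point is already the trivial germ, so the isotropy group $\G_y$ is trivial whenever $y$ is not singular. At a singular point $w = x_1 x_2 \cdots$ I would study $\G_w$ via the section maps $g \mapsto g_{|x_1\cdots x_n}$ on the stabilizer $G_w$, which are homomorphisms because every $h \in G_w$ fixes all prefixes of $w$; recording the first-level data $\rho_1(g_{|x_1\cdots x_n}) \in P$ gives a homomorphism $G_w \to \prod_n P$ that descends, modulo trivial germs, to a map of $\G_w$ into the amenable group $\bigl(\prod_n P\bigr)/\bigl(\bigoplus_n P\bigr)$. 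The kernel consists of germs whose first-level action along $w$ is eventually trivial; using Lemma~\ref{lem:structbounded} these are represented by directed automata whose finitely many decorations lie in $\autF(X^*;P)$, so the kernel embeds into a product of copies of the amenable group $\autF(X^*;P)$. Thus $\G_w$ is an extension of an amenable group by an amenable group, hence amenable, giving hypothesis~(4); with all four hypotheses verified, Theorem~\ref{thm:JNdlS} yields amenability of $G$ and therefore of $\autBfs(X^*;P)$. I expect this last hypothesis to be the main obstacle: the delicate point is to control $\G_w$ precisely and to show that the kernel of the first-level germ homomorphism really is amenable, which is where the finite-state structure of Lemma~\ref{lem:structbounded} and the reduction to decorations in $\autF(X^*;P)$ do the essential work.
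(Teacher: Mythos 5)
You have proved the wrong statement, and in a way that is circular with respect to the one you were asked to prove. The statement in question is Theorem~\ref{thm:JNdlS} itself: the general amenability criterion of Juschenko--Nekrashevych--de la Salle for a finitely generated group of homeomorphisms of an arbitrary topological space $Y$, relative to an arbitrary groupoid of germs $\HH$. Your proposal opens with ``the plan is to deduce the statement from the amenability criterion of Theorem~\ref{thm:JNdlS}'' --- that is, you invoke the very theorem you are supposed to prove, and what you then write out is essentially the paper's proof of Theorem~\ref{thm:maintheorem}, the \emph{application} of the criterion to $G \subseteq \autBfs(X^*;P)$ acting on $X^\omega$ with $\HH = \T$. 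The paper does not prove Theorem~\ref{thm:JNdlS} at all: it cites it from~\cite{juschenko2016}, and the accompanying remark only explains that weakening condition~(1) from amenability of $\left[\left[\HH\right]\right]$ to amenability of $\left[\left[\HH\right]\right] \cap G$ costs nothing, because the original proof in~\cite{juschenko2016} only uses the weaker hypothesis. A genuine proof of the statement would have to reproduce the machinery of that paper (recurrent groupoids, extensive amenability of the relevant wreath-product-type extensions), none of which appears in your proposal; verifying the four hypotheses in the special case of bounded automata cannot establish the general criterion.

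A secondary point: even read as a proof of Theorem~\ref{thm:maintheorem}, your verification of hypothesis~(4) has a gap. You map $\G_w$ into $\bigl(\prod_n P\bigr)/\bigl(\bigoplus_n P\bigr)$ and declare this group amenable, but amenability is not closed under infinite direct products ($\prod_n P$ can contain nonabelian free subgroups even when $P$ is amenable), so the extension argument collapses. The paper's Lemma~\ref{lem:isotopygroups} proceeds differently: after normalizing via Lemma~\ref{lem:structbounded} so that every generator has sections that are finitary or directed along constant paths, it shows the sections $g_{|v_n}$ along a singular ray stabilize and embeds the stabilizer $G_w$ into the single amenable group $\wwr{\autF(X^*;P)}{P_x}{X\setminus\left\{ x \right\}}$, where $P_x$ is the stabilizer of the letter $x$ in $P$. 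If you do want to argue along a first-level-germ homomorphism, you must avoid infinite products of $P$ and instead exploit this stabilization, as the paper does.
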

\begin{remark}
  This is almost Theorem~3.1 in~\cite{juschenko2016}, but we weakened the condition (1) from $ \left[\left[ \HH \right]\right]$ amenable to $ \left[\left[ H \right]\right]\cap G$ amenable.
  The original proof only used the weaker condition.
\end{remark}

\begin{proof}[Proof of Theorem~\ref{thm:maintheorem}]
  In order to show amenability of $\autBfs(X^*;P)$, it is enough to show amenability of every finitely generated subgroup of $\autBfs(X^*;P)$.
  So let $G =\langle S\rangle$ be a finitely generated subgroup of $\autBfs(X^*;P)$. 
  We will use Theorem~\ref{thm:JNdlS} with $G$ acting on $X^\omega$, and $\HH = \T$.
  We will show that each condition of Theorem~\ref{thm:JNdlS} is satisfied.
  
  \begin{enumerate}
    \item The group $\left[\left[ \T \right]\right] \cap G$ is amenable:
      In fact $\left[\left[ \T \right]\right] \cap \autBfs(X^*;P) = \autF(X^*;P)$. This  
      follows easily from Lemma~\ref{lem:structbounded}. Now $\autF(X^*;P)$ is the direct limit of iterated wreath products of $P$, so it is amenable,
      hence $\left[\left[ \T \right]\right] \cap G$ is amenable.
      
    \item This follows directly from Lemma~\ref{lem:autbcofinal}.
    \item By inductive application of Lemma~\ref{lem:recurrentwr},
      we see that the action of $n$-th iterated wreath product of $P$ on $X^n$
      is recurrent.  Hence by Remark~\ref{rem:recurrentsubgroups}, the action of
      $G$ on every level is recurrent. Since $G \subset \autBfs(X^*;P) \subset
      \autB(X^*)$, we get by Lemma~\ref{lem:recurrentautb} that $G$ acts
      recurrently on all orbital Schreier graphs.

    \item We encapsulate the proof in the following lemma.
  \end{enumerate}
\end{proof}
\begin{lemma}
  Let $P$ be an amenable subgroup of $\Sym(X)$. Let $G$ be a finitely generated subgroup of $\autBfs(X^*;P)$, $w \in X^\omega$. Then the isotopy group $\G_w$ is amenable.
  \label{lem:isotopygroups}
\end{lemma}
\begin{proof}
  By replacing $X$ with $X^N$ and $P$ with $P_N$ and possibly enlarging the group $G$ itself, we can use Lemma~\ref{lem:structbounded} to assume wlog.\ the following:
  \begin{itemize}
    \item $G$ has a symmetric self-similar generating set $S$.
    \item For all $s \in S$ and $x \in X$, the section $s_{|x}$ is either finitary or directed.
    \item For every directed $s \in S$, there is a $x \in X$ with $s_{|x} = s$. So every directed generator is directed along a constant path.
  \end{itemize}
  Let $\Omega = \left\{ w \in X^\omega \colon w \text{ is eventually constant} \right\}$. Then $\Omega$ is invariant under the action of every generator in $S$,
  so $X^\omega \setminus \Omega$ is also invariant under the action of $G$. For every generator, the germs in $X^\omega \setminus \Omega$ are contained in $\T$,
  so for $w \in X^\omega \setminus \Omega$, the isotropy group $G_w$ is contained in $\T_w = \Id$, so it is trivial.

  For a word $w \in \Omega$, and a group element $g \in G$, let $v_n$ be the prefix of $w$ of length $n$ and consider the sequence
  $g_{|v_n}$. We claim that this sequence is eventually constant, and if $w$ is eventually constantly the letter $x$, then for all $y \in X \setminus \left\{ x \right\}$,
  the section $g_{|v_{n}y}$ is contained in $\autF(X^*;P)$ for $n$ large enough.

  This is true for the generating set by direct inspection and the statement follows by induction over the word length of $g$.

  In particular, for $w \in \Omega$ eventually constantly $x \in X$, we get a group homomorphism 

  \begin{eqnarray*}
    G_w &\rightarrow& \wwr{\autF(X^*;P)}{P_x}{X\setminus\left\{ x \right\}} \\
    g &\mapsto& (y \mapsto g_{|v_{n}y}, \rho_1(g_{|v_{n}})) \text{ for } n \text{ large enough.}
  \end{eqnarray*}

 Here $P_x$ is the stabilizer of $x\in X$ of the action of $P$ on $X$.
  The group homomorphism is injective, and the codomain is amenable, so $G_w$ is amenable.
\end{proof}
\section{Outlook}
Our main application of the main theorem are iterated monodromy groups of post-singularly finite entire functions,
see~\cite{Reinkeentire}. We use the version of Theorem~\ref{thm:JNdlS} from~\cite{juschenko2016}, which impose a recurrence condition
on the random walk on the orbital Schreier graphs. This recurrence condition
was generalized to an extensive amenability condition in~\cite{juschenko2016extensive}. It is shown in~\cite{juschenko2016extensive} that every recurrent action is also extensive amenable. In our Theorem~\ref{thm:maintheorem}, it would be interesting to see whether we could weaken the recurrence condition to an condition about extensive amenability.
Another direction to generalize is to step up in the hierarchy of automata with polynomial activity growth.
In~\cite{AAV, juschenko2016}, it is shown that
the group of automata of linear activity growth acting on a finite alphabet is amenable. Again, crucial step here is
the recurrence of the random walk of the orbital Schreier graphs. It is not clear how this generalizes to infinite alphabets, as it seems that the estimates to show recurrence used finiteness of the alphabet at an important point.

\end{document}